\documentclass[11pt]{article}
\usepackage[utf8]{inputenc}
\usepackage[T1]{fontenc}
\usepackage[margin=1in]{geometry}
\usepackage{amsmath,amssymb,amsthm}
\usepackage{mathtools}
\usepackage[unicode,pdfusetitle]{hyperref}

\newcommand{\GA}{\mathrm{GA}}
\newcommand{\Cr}{\mathrm{Cr}}
\newcommand{\Id}{\mathrm{Id}}
\newcommand{\Aut}{\operatorname{Aut}}
\newcommand{\F}{\mathbb{F}}

\theoremstyle{plain}
\newtheorem{theorem}{Theorem}[section]
\newtheorem{proposition}[theorem]{Proposition}
\newtheorem{lemma}[theorem]{Lemma}
\newtheorem{corollary}[theorem]{Corollary}
\theoremstyle{definition}
\newtheorem{definition}[theorem]{Definition}
\newtheorem{example}[theorem]{Example}
\theoremstyle{remark}
\newtheorem{remark}[theorem]{Remark}

\title{Order and Pascal depth of Pascal finite automorphisms of the plane}
\author{%
  El\.zbieta Adamus \\ Faculty of Applied Mathematics, \\
  AGH University of Krakow \\ al.~Mickiewicza 30, 30-059 Krak\'ow, Poland \\
  e-mail: esowa@agh.edu.pl
  \\[0.6cm]
  Zbigniew Hajto \\ Faculty of Mathematics and Computer Science, \\
  Jagiellonian University \\ ul.~\L ojasiewicza 6, 30-348 Krak\'ow, Poland \\
  e-mail: zbigniew.hajto@uj.edu.pl%
}
\date{}

\begin{document}
\maketitle

\begin{abstract}
Let $K$ be a field of characteristic $p>0$. For a Pascal finite automorphism
$F$ of the affine plane we show that its order is determined by its Pascal depth,
$|F|=p^{\lceil\log_p\tau_K(F)\rceil}$, and that, combined with Dolgachev's theorem
on the plane Cremona group, this pins the order spectrum of Pascal finite plane
automorphisms to $\{1,p,p^2\}$ and bounds the Pascal depth by $\tau_K(F)\le p^2$.
For the polynomial group $\GA_2(K)$ we give a second, independent proof of the
order-$p^2$ ceiling, a purely group-theoretic argument from the Jung--van der
Kulk amalgam and Serre's tree theorem, using no birational geometry.
We prove that the bound is sharp in two independent senses. Order~$p^2$ is
attained by the length-two Witt vectors, and Pascal depth $p^2$ is attained by an
explicit tame automorphism $G_p$, for which we give a characteristic-free
proof that $\tau_K(G_p)=p^2$. We  contrast the plane
with higher dimensions, where both order and depth are unbounded.
\end{abstract}

\medskip
\noindent\textbf{Keywords:} polynomial automorphism, Pascal finite map, order of an automorphism, positive characteristic, plane Cremona group, Witt vectors

\medskip
\noindent\textbf{Mathematics Subject Classification 2020:} Primary~14R10;
Secondary~14E07, 20E06

\section{Introduction}\label{sec:intro}

Throughout, $K$ is a field, $K[X]=K[X_1,\dots,X_n]$, and $\GA_n(K)=\Aut_KK[X]$ is
the group of polynomial automorphisms of $K^n$. Over a field of characteristic
zero the \emph{Exponential Generators Conjecture} asserts that $\GA_n(K)$ is
generated by the affine automorphisms together with the exponentials $\exp(D)$ of
locally nilpotent derivations~\cite{E}. In positive characteristic the
exponential map is unavailable, and \emph{Pascal finite} automorphisms take its
place~\cite{ABCH2}. The resulting Pascal Finite Generators Conjecture agrees with
the exponential one in characteristic zero, where the two classes of generators
coincide.

The order of a Pascal finite automorphism in characteristic $p$ is always a power
of $p$~\cite{ABCH2}. Our first observation (Lemma~\ref{lem:order-depth}) is that
this power is governed exactly by the Pascal depth,
\[
   |F|=p^{\lceil\log_p\tau_K(F)\rceil}.
\]
In dimension $n\ge 3$ the depth, and hence the order, is unbounded
(Proposition~\ref{prop:chains}). The plane is different. Viewing
$\GA_2(K)\subset\Cr_2(K)$ inside the plane Cremona group and invoking Dolgachev's
theorem that $\Cr_2(K)$ has no element of order $p^3$~\cite{Do,Se2}, we obtain
the following.

\begin{theorem}\label{thm:tower}
Let $\operatorname{char}K=p>0$ and let $F\in\GA_2(K)$ be Pascal finite. Then
$|F|=p^{\lceil\log_p\tau_K(F)\rceil}\in\{1,p,p^2\}$ and $\tau_K(F)\le p^2$.
Equivalently,
\[
   |F|=1\iff\tau_K(F)=1,\quad
   |F|=p\iff 2\le\tau_K(F)\le p,\quad
   |F|=p^2\iff p+1\le\tau_K(F)\le p^2 .
\]
Both nontrivial values $p$ and $p^2$ occur, and no higher order occurs.
\end{theorem}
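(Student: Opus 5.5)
The proof has three steps: reduce the order to the Pascal depth via Lemma~\ref{lem:order-depth}, bound the order by $p^2$ via Dolgachev's theorem, and exhibit examples of each order.

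\textbf{Step 1 (order and depth).} Recall that $F$ is Pascal finite when the sequence of iterates $m\mapsto F^m$ satisfies a linear recurrence with Pascal-type characteristic polynomial $(T-1)^{\tau}$. Here $\tau=\tau_K(F)$ is minimal, which is the same as saying that the finite differences $\Delta^{\tau}$ of the iterate sequence vanish. Lemma~\ref{lem:order-depth}, stated earlier, gives $|F|=p^{\lceil\log_p\tau_K(F)\rceil}$, so I would simply cite it. If I had to reprove it, I would use $(T-1)^{p^k}=T^{p^k}-1$ in characteristic $p$. This shows $F^{p^k}=\Id$ whenever $\tau\le p^k$. Conversely, $F^{p^k}=\Id$ means the minimal polynomial of the shift divides both $T^{p^k}-1=(T-1)^{p^k}$ and $(T-1)^\tau$, so $\tau\le p^k$. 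Together these say the order is the least $p^k$ with $p^k\ge\tau$.

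\textbf{Step 2 (order bound).} Since $|F|$ is a power of $p$ and $\GA_2(K)\subset\Cr_2(K)$, Dolgachev's theorem that $\Cr_2(K)$ contains no element of order $p^3$ applies. If $|F|=p^k$ with $k\ge3$, then $F^{p^{k-3}}$ would have order exactly $p^3$, which is impossible. Hence $|F|\in\{1,p,p^2\}$. Combining with Step~1, $\lceil\log_p\tau\rceil\le 2$, so $\tau\le p^2$. The three displayed equivalences are just the case split of $\lceil\log_p\tau\rceil\in\{0,1,2\}$ for the integer $\tau\ge1$.

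\textbf{Step 3 (both orders occur).} Order $p$ is realized by a translation $(X_1+1,X_2)$. Its iterates are $(X_1+m,X_2)$, which are linear in $m$, so $\tau=2$. Order $p^2$ is realized by the length-two Witt vector addition, namely the triangular map
\[
(X_1,X_2)\mapsto\bigl(X_1+1,\;X_2+S(X_1)\bigr),\qquad S(X_1)=\frac{X_1^p+1-(X_1+1)^p}{p}
\]
reduced mod $p$. This is the automorphism "add $(1,0)$" in $W_2$. It has order $p^2$ because $p\cdot(1,0)=(0,1)\neq0$ in $W_2(\F_p)=\mathbb{Z}/p^2$. It is Pascal finite since its $m$-th iterate is $(X_1+m,\,X_2+\sigma_m(X_1))$, where $\sigma_m$ is polynomial in $m$ as a Witt carry. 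Alternatively, Pascal finiteness follows because the paper's later example $G_p$ with $\tau_K(G_p)=p^2$ also witnesses order $p^2$ via Step~1.

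The main obstacle is Step~3: checking that the Witt map is genuinely Pascal finite and not merely of finite order. I would handle this by citing the later explicit computation of $\tau_K(G_p)=p^2$, or by the general fact that unipotent triangular automorphisms of order a power of $p$ are Pascal finite. Steps~1 and~2 are formal once Lemma~\ref{lem:order-depth} and Dolgachev's theorem are available.
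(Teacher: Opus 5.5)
Your proof is correct and follows essentially the same route as the paper: cite Lemma~\ref{lem:order-depth}, apply Dolgachev's theorem to $F\in\GA_2(K)\subset\Cr_2(K)$ to exclude order $p^3$, read off the three equivalences from $\lceil\log_p\tau_K(F)\rceil\in\{0,1,2\}$, and realise the orders by a translation and by the length-two Witt vector translation. The ``main obstacle'' you flag in Step~3 is not one, and your ``polynomial in $m$'' justification (not meaningful in characteristic $p$) can be dropped: as in the paper's Proposition~\ref{prop:witt}, $F^{(p^2)}=\Id$ already gives $\Delta_F^{p^2}=\sigma_F^{p^2}-\Id=0$ by the Frobenius identity, so any automorphism of $p$-power order is automatically Pascal finite.
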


For the polynomial group $\GA_2(K)$ the ceiling $|F|\mid p^2$ admits, in
addition, a proof independent of Dolgachev's theorem, from the Jung--van der
Kulk amalgam and Serre's theorem on group actions on trees
(Proposition~\ref{prop:amalgam}). 

The bound $\tau_K(F)\le p^2$ is sharp, and in two genuinely different ways.
On the level of order, the value $p^2$ is realised by the length-two Witt
vectors (Proposition~\ref{prop:witt}). On the level of depth, order
$p^2$ alone does not force $\tau_K=p^2$. It only forces $\tau_K\in\{p+1,\dots,
p^2\}$, so depth-sharpness requires a dedicated example.

\begin{theorem}\label{thm:Gp}
Let $\operatorname{char}K=p>0$ and set
\[
   M_p=(X_1+X_2^{p-1},\,X_2+1),\qquad S_p=(X_2+X_1^{2p-1},\,X_1),\qquad
   G_p=S_p\circ M_p\circ S_p^{-1}\in\GA_2(K).
\]
Then $G_p$ is tame, is not a quasi-translation, and $\tau_K(G_p)=|G_p|=p^2$.
\end{theorem}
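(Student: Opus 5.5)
The plan is to reduce everything to one explicit computation with the iterates of $G_p$.

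Throughout I use the description of Pascal depth via finite differences: $\tau_K(F)$ is the least $\tau$ with $\sum_{i=0}^{\tau}(-1)^{i}\binom{\tau}{i}F^{i}=0$, i.e.\ $(T-1)^{\tau}$ annihilates the sequence $m\mapsto F^m$ coefficientwise. This is what underlies Lemma~\ref{lem:order-depth}, through $(T-1)^{p^k}=T^{p^k}-1$.

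\textbf{Tameness and order.} Tameness is immediate. $M_p$ is triangular, and $S_p$ is the swap $(X_2,X_1)$ composed with the triangular map $(X_1+X_2^{2p-1},X_2)$, so $G_p$ is a product of tame maps.

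For the order, iterate $M_p$. Setting $P_m(t)=\sum_{j=0}^{m-1}(t+j)^{p-1}$, one gets
\[
M_p^m=(X_1+P_m(X_2),\,X_2+m).
\]
Since $\sum_{c\in\F_p}(t+c)^{p-1}=\sum_{c\in\F_p}c^{p-1}=-1$, we have $M_p^p=(X_1-1,X_2)$. Hence $|M_p|=p^2$, and by conjugation $|G_p|=p^2$. Lemma~\ref{lem:order-depth} then gives $p+1\le\tau_K(G_p)\le p^2$; in particular $G_p$ is Pascal finite.

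\textbf{Not a quasi-translation.} I would check this directly from the formula for $G_p$ below. A quasi-translation $F=X+H$ satisfies $H(X+tH)=H$, and already $G_p^2\ne 2G_p-\Id$ should be visible from a leading term. Alternatively, quasi-translations have depth $\le 2$, while we will show depth $p^2\ge 4$ (for $p=2$ a separate check of the leading term is needed).

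\textbf{Depth equals $p^2$.} Conjugation does not preserve depth, because left composition with the nonlinear $S_p$ is not linear, so the iterates must be computed explicitly. With
\[
S_p^{-1}=(X_2,\,X_1-X_2^{2p-1})
\]
one obtains
\[
G_p^m=\bigl(X_1-X_2^{2p-1}+m+(X_2+P_m(u))^{2p-1},\;X_2+P_m(u)\bigr),\qquad u=X_1-X_2^{2p-1}.
\]
Since $|G_p|=p^2$ already bounds the depth by $p^2$, it suffices to show $(T-1)^{p^2-1}G_p^m\neq0$. Because $(-1)^i\binom{p^2-1}{i}\equiv1\pmod p$ for $0\le i\le p^2-1$, this is equivalent to the characteristic-free criterion
\[
\Sigma:=\sum_{m=0}^{p^2-1}G_p^m\neq 0 .
\]
This is the step I expect to be the main obstacle.

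To attack it, write $m=qp+r$ with $0\le q,r<p$. Then $P_m=-q+R_r(u)$ with $R_r(u)=\sum_{j<r}(u+j)^{p-1}$. The second component and the linear terms sum to zero over a full period, so everything rests on
\[
\sum_{q,r}\bigl(X_2-q+R_r(u)\bigr)^{2p-1}.
\]
Expanding in $q$, only the powers $q^{p-1}$ survive the sum over $q$ (with sum $-1$), leaving
\[
-\binom{2p-1}{p-1}\sum_{r}\bigl(X_2+R_r(u)\bigr)^{p}.
\]
Here $\binom{2p-1}{p-1}\equiv1$ by Lucas. By Frobenius this equals $-\sum_r\bigl(X_2^p+R_r(u)^p\bigr)=-\sum_r R_r(u)^p$, since $\sum_r X_2^p=pX_2^p=0$.

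I would then show $\sum_{r<p}R_r(u)=\sum_{r}(p-1-r)(u+r)^{p-1}$ is a nonzero polynomial in $u$ by comparing a single coefficient: its constant term is $\sum_r(-1-r)r^{p-1}=\sum_{r\ne0}(-1-r)=1$. Frobenius then transfers this to $\sum_r R_r(u)^p$. If that coefficient check misbehaves, I would fall back to isolating the top $u$-degree term instead.
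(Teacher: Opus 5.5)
\textbf{There is a genuine gap, in the key sum, though it can be repaired.} Your reduction itself is sound. It is the paper's computation transported by $\sigma_{S_p^{-1}}$: the first coordinate of $\Sigma=\sum_{m<p^2}G_p^{(m)}=\Delta_{G_p}^{p^2-1}(X)$ is $\sigma_{S_p^{-1}}$ applied to the paper's $\Delta_{M_p}^{p^2-1}(X_2+X_1^{2p-1})$. Tameness, the order, the vanishing of the second coordinate and of the linear terms, and the Lucas reduction to $\Sigma\neq0$ are all fine.

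The error is this. Put $A_r:=X_2+R_r(u)$. Summing $(A_r-q)^{2p-1}$ over $q\in\F_p$ keeps every exponent $k\in\{1,\dots,2p-1\}$ with $(p-1)\mid k$. For $p$ odd that is $k=p-1$ \emph{and} $k=2p-2$. The second exponent has coefficient $\binom{2p-1}{2p-2}=2p-1\equiv-1\not\equiv0$, so
\[
\sum_{q\in\F_p}(A-q)^{2p-1}=A-A^{p}\qquad(p\ \text{odd}),
\]
not $-A^p$. For $p=2$, all of $k=1,2,3$ survive and the sum is $A^2+A+1$. Hence $\Sigma_1=f-f^{p}$ with $f=\sum_rR_r(u)$, not $-f^p$. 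For $p=3$ the true value is $u^3-u$, while your formula gives $u^3-1$. The missing piece is exactly the $m=2$ term that the paper keeps in $S_{2p-1}$, which produces $T_1$ alongside $T_p$.

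The omission breaks your nonvanishing argument, not just the bookkeeping. The map $f\mapsto f-f^p$ annihilates every constant in $\F_p$, so knowing that $f$ has constant term $1$ proves nothing: were $f$ the constant $1$, $\Sigma_1$ would vanish. What you need is that $f$ is nonconstant, which is your fallback. Its $u$-coefficient is $\sum_{r}(1+r)r^{p-2}=-1$. In fact $f=1-u$, by the paper's identity $\sum_{j\in\F_p}(1+j)(Y+j)^{p-1}=Y-1$. This gives $\Sigma=(u^p-u,\,0)\neq0$, the transport of the paper's value $X_2^p-X_2$.

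Once corrected, your grouping (keeping $X_2$ inside $A_r$ and summing over $q$ first) is a bit more economical than the paper's. The formula $\Sigma_1=f-f^p$ falls out without the separate check $S_k=0$ for $k\le 2p-2$. For $p=2$ you reach $f+f^2=u^2+u$ by a different expansion, so treat that case separately, as the paper does.

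A minor point: Pascal finiteness of $G_p$ should be taken from $\Delta^{p^2}=\sigma^{p^2}-\Id=0$. Lemma~\ref{lem:order-depth} presupposes it, so it cannot supply it.
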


The paper is organised as follows. Section~\ref{sec:prelim} recalls the
 Pascal finite formalism. Section~\ref{sec:order-depth} proves the
order--depth identity and the unboundedness in dimension $n\ge 3$.
Section~\ref{sec:tower} proves Theorem~\ref{thm:tower} from
Dolgachev's theorem. Section~\ref{sec:sharp} treats sharpness.
Section~\ref{sec:disc} collects the structural interpretation.

\section{The Pascal finite maps}\label{sec:prelim}

An element $F\in\GA_n(K)$ is written
$F=(F_1,\dots,F_n)$ with $F_i\in K[X]$, composition being
$(F\circ G)_i=F_i\circ G$.
We write $F^{(m)}$ for the $m$-fold composite and
$|F|$ for the order of $F$ in $\GA_n(K)$. The affine automorphisms $X\mapsto
AX+b$, with $A\in\mathrm{GL}_n(K)$ and $b\in K^n$, form the subgroup
$\mathrm{Aff}(K,n)$. We call $F$ \emph{triangular} if
$F_i=\lambda_iX_i+q_i(X_{i+1},\dots,X_n)$ with $\lambda_i\in K^\times$ and
$q_i\in K[X_{i+1},\dots,X_n]$, so that $q_n\in K$; the triangular automorphisms
form the \emph{de Jonqui\`eres} subgroup $J(K,n)$, and we put
$B(K,n)=\mathrm{Aff}(K,n)\cap J(K,n)$. A triangular $F$ with
$\lambda_1=\dots=\lambda_n=1$, i.e.\ $F_i=X_i+q_i(X_{i+1},\dots,X_n)$, is called
\emph{unitriangular}. Finally, $F$ is \emph{elementary} if, after a permutation
of the coordinates, $F=(X_1+q(X_2,\dots,X_n),X_2,\dots,X_n)$ for some
$q\in K[X_2,\dots,X_n]$, and \emph{tame} if it lies in the subgroup of $\GA_n(K)$
generated by $\mathrm{Aff}(K,n)$ and the elementary automorphisms.

\medskip

For $F\in K[X]^n$ let $\sigma_F\colon K[X]\to K[X]$ be the pullback
$\sigma_F(P)=P\circ F$, and put
\[
   \Delta_F:=\sigma_F-\Id .
\]
When $F$ is an automorphism, $\sigma_F$ is a $K$-algebra automorphism and
$\Delta_F$ is a $\sigma_F$-derivation, i.e. $\Delta_F(PQ)=\Delta_F(P)\,\sigma_F(Q)
+P\,\Delta_F(Q)$. Since $\sigma_F^{m}(X_i)=F^{(m)}_i$, the $m$-fold composite
$F^{(m)}$ is recovered by the Newton expansion
\begin{equation}\label{eq:newton}
   F^{(m)}=\sigma_F^m(X)=(\Id+\Delta_F)^m(X)=\sum_{k\ge 0}\binom{m}{k}\Delta_F^k(X).
\end{equation}

The algorithm for $F$ described in~\cite{ABCH}, applied to $g\in K[X]$, forms
the sequence $g$, $\Delta_F(g)$, $\Delta_F^2(g)$, \dots, each term obtained from
its predecessor by the substitution $h\mapsto h\circ F-h$.

\begin{definition}\label{def:pf}
The \emph{Pascal depth of $g$ w.r.t. $F$} is the
number of steps after which the algorithm stops,
\[
   \tau_K^F(g)=\min\{m\ge 1:\Delta_F^m(g)=0\}
\]
(and $\tau_K^F(g)=\infty$ if it never halts). We set
$\tau_K(F)=\max_{1\le i\le n}\tau_K^F(X_i)$ and call $F$ \emph{Pascal finite} when
the algorithm halts on every coordinate, i.e. $\tau_K(F)<\infty$. The class of
Pascal finite automorphisms of $K^n$ is denoted $\mathrm{PF}(K,n)$. 
\end{definition}

In characteristic zero the Pascal finite automorphisms are exactly the
exponential ones~\cite{ABCH2}.

\begin{remark}\label{rem:canonical}
In characteristic zero the Pascal depth admits several equivalent readings. Namely, the
operator $\Delta_F$ is locally nilpotent and $F=\exp(D)$ for the derivation
\[
   D=\log\sigma_F=\sum_{k\ge1}(-1)^{k-1}\Delta_F^{k}/k,
\]
equivalently, the minimal
polynomial of $\sigma_F$ on each invariant finite-dimensional subspace is
$(T-1)^{\tau_K}$, equivalently, the iterates $F^{(m)}$ obey a polynomial-type
linear recurrence. In characteristic $p>0$ these descriptions degenerate. The
logarithm needs the forbidden denominators $k!$, and the recurrence and
minimal-polynomial machinery break down, whereas the difference algorithm, using only substitution, holds. The Frobenius identity
$\sigma_F^{p^{s}}=\Id+\Delta_F^{p^{s}}$ of Lemma~\ref{lem:order-depth} shows it sharpens.
Stopping of the algorithm is equivalent to
$p$-primary torsion. We therefore adopt the difference algorithm condition as the  definition throughout.
\end{remark}

\section{Order is governed by depth}\label{sec:order-depth}

\begin{lemma}\label{lem:order-depth}
Let $\operatorname{char}K=p>0$ and let $F\in\mathrm{PF}(K,n)$. Then
\[
   |F|=p^{\lceil\log_p\tau_K(F)\rceil}.
\]
In particular the order of a Pascal finite automorphism is a power of $p$.
\end{lemma}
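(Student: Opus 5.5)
The plan is to work entirely with the operator identity behind the Newton expansion~\eqref{eq:newton}. Since $\sigma_F$ and $\Id$ commute and $\operatorname{char}K=p$, the Frobenius identity in the (commutative) subring generated by $\sigma_F$ inside $\operatorname{End}_K K[X]$ gives
\[
   \sigma_F^{p^s}=(\Id+\Delta_F)^{p^s}=\Id+\Delta_F^{p^s}\qquad(s\ge 0),
\]
because all intermediate binomial coefficients $\binom{p^s}{k}$, $0<k<p^s$, vanish in $K$. Applying this to each coordinate $X_i$ yields $F^{(p^s)}_i=X_i+\Delta_F^{p^s}(X_i)$. Put $\tau=\tau_K(F)<\infty$ and $s_0=\lceil\log_p\tau\rceil$, the least $s$ with $p^s\ge\tau$. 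Then $\Delta_F^{p^{s_0}}(X_i)=0$ for every $i$, since $\Delta_F^m(X_i)=0$ for all $m\ge\tau^F_K(X_i)$. Hence $F^{(p^{s_0})}=\Id$, and $|F|$ divides $p^{s_0}$. In particular $|F|$ is a power of $p$.

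For the reverse inequality I will show that $F^{(p^{s})}\ne\Id$ for $s=s_0-1$ when $\tau\ge2$; when $\tau=1$ we have $s_0=0$ and $F=\Id$. Choose $i$ with $\tau^F_K(X_i)=\tau$. Since $p^{s_0-1}<\tau$, we get $\Delta_F^{p^{s_0-1}}(X_i)\ne0$: the sequence $\Delta_F^m(X_i)$ is nonzero for every $m<\tau$, because once a term vanishes all later ones do. By the Frobenius identity $F^{(p^{s_0-1})}_i=X_i+\Delta_F^{p^{s_0-1}}(X_i)\ne X_i$. So $F^{(p^{s_0-1})}\ne\Id$. As $|F|$ is a power of $p$ dividing $p^{s_0}$ but not dividing $p^{s_0-1}$, we conclude $|F|=p^{s_0}$.

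The only step requiring care is the Frobenius identity itself, that is, justifying the binomial expansion for operators. Here $\Id$ is central, so $(\Id+\Delta_F)^{m}=\sum_k\binom mk\Delta_F^k$ holds in the endomorphism ring, and this is exactly~\eqref{eq:newton} evaluated on $X$. Lucas' theorem, or simply $p\mid\binom{p^s}{k}$ for $0<k<p^s$, then kills the middle terms. We also need $\sigma_F^m(X_i)=F^{(m)}_i$, already noted before~\eqref{eq:newton}; this identifies equality of maps with equality of coordinate functions. Everything else is bookkeeping of the monotonicity of vanishing.
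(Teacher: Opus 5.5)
Your proposal is correct and follows essentially the same route as the paper: the Frobenius identity $\sigma_F^{p^s}=\Id+\Delta_F^{p^s}$ is applied to the coordinates, which gives $F^{(p^s)}=\Id$ exactly when $p^s\ge\tau_K(F)$, and the order follows by minimality of $s$. The only difference is that you spell out the ``only if'' direction (that $\Delta_F^m(X_i)\neq 0$ for $m<\tau_K^F(X_i)$) and the trivial case $\tau_K(F)=1$, both of which the paper leaves implicit.
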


\begin{proof}
Because $\Id$ and $\Delta_F$ commute, the Frobenius identity in characteristic $p$
gives $\sigma_F^{p^{s}}=(\Id+\Delta_F)^{p^{s}}=\Id+\Delta_F^{p^{s}}$, the middle
binomial coefficients $\binom{p^{s}}{k}$ ($0<k<p^{s}$) vanishing modulo $p$ by
Lucas' theorem~\cite{L}. Applying this to the coordinates,
$F^{(p^{s})}=X+\Delta_F^{p^{s}}(X)$, so $F^{(p^{s})}=\Id$ if and only if
$\Delta_F^{p^{s}}(X_i)=0$ for all $i$, i.e. if and only if $p^{s}\ge\tau_K(F)$.
The least such $s$ is $\lceil\log_p\tau_K(F)\rceil$, and as $F^{(p^{s})}=\Id$ for
that $s$, the order $|F|$ divides $p^{s}$ and is therefore a power of $p$.
Minimality of $s$ gives $|F|=p^{s}$.
\end{proof}

\begin{lemma}\label{lem:unitri}
Let $\operatorname{char}K=p>0$. Every unitriangular $F\in\GA_n(K)$ is Pascal
finite.
\end{lemma}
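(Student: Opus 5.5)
We prove that $\Delta_F^m(X_i)=0$ for every $i$ and some finite $m$, by downward induction on the index $i$, exploiting the triangular shape. The key structural fact is that $\sigma_F$ preserves each subalgebra $A_j:=K[X_j,\dots,X_n]$: since $F_k=X_k+q_k(X_{k+1},\dots,X_n)\in A_j$ for all $k\ge j$, we have $\sigma_F(P)=P(F_j,\dots,F_n)\in A_j$ for $P\in A_j$. Hence $\Delta_F$ maps $A_j$ into itself. We will show that $\Delta_F$ is \emph{locally nilpotent} on $A_j$ for $j=n,n-1,\dots,1$. Then $\Delta_F$ is locally nilpotent on $A_1=K[X]$, so $\tau_K^F(X_i)<\infty$ for every $i$ and $F$ is Pascal finite.

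For the base case $j=n$, note that $F_n=X_n+c$ with $c\in K$, so $\Delta_F(X_n^d)=(X_n+c)^d-X_n^d$ has degree $<d$. Thus $\Delta_F$ strictly lowers degree on $K[X_n]$ and is locally nilpotent there.

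For the inductive step, assume $\Delta_F$ is locally nilpotent on $A_{j+1}$. Write elements of $A_j$ as polynomials in $X_j$ with coefficients in $A_{j+1}$, and filter $A_j$ by $X_j$-degree. Set $V_d=\bigoplus_{e\le d}A_{j+1}X_j^e$. The twisted Leibniz rule gives
\[
\Delta_F(aX_j^e)=\Delta_F(a)\,\sigma_F(X_j^e)+a\,\Delta_F(X_j^e).
\]
Here $\sigma_F(X_j^e)=(X_j+q_j)^e$, which equals $X_j^e$ plus lower $X_j$-powers, and $\Delta_F(X_j^e)=(X_j+q_j)^e-X_j^e$ lies in $V_{e-1}$. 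Consequently, modulo $V_{e-1}$,
\[
\Delta_F(aX_j^e)\equiv\Delta_F(a)X_j^e.
\]
So $\Delta_F$ preserves $V_d$ and induces on $V_d/V_{d-1}\cong A_{j+1}$ exactly the operator $\Delta_F|_{A_{j+1}}$, which is locally nilpotent by the induction hypothesis.

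The main obstacle is passing from local nilpotence on each graded piece to local nilpotence on $A_j$. We handle it by induction on $d$. Take $P\in V_d$ with leading coefficient $a$, and choose $m$ with $\Delta_F^m(a)=0$. Then $\Delta_F^m(P)\in V_{d-1}$. By the induction hypothesis on $d$, some further power of $\Delta_F$ kills $\Delta_F^m(P)$, hence kills $P$.

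This argument uses only that $\Delta_F$ stabilises the filtration and acts locally nilpotently on the successive quotients. It never needs $\operatorname{char}K=p$, so the hypothesis on the characteristic enters only through the way Lemma~\ref{lem:order-depth} is used afterwards. As a byproduct, the argument yields an explicit bound on $\tau_K(F)$ in terms of the degrees of the $q_i$.
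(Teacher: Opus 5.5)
Your proof is correct, but it takes a different route from the paper's.

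\textbf{The paper's route.} The paper also inducts downward on $R_i=K[X_i,\dots,X_n]$, but the statement it carries is multiplicative, $\sigma_F^{p^s}|_{R_i}=\Id$, not local nilpotence. From $\sigma_F^{p^s}|_{R_{i+1}}=\Id$, the Frobenius identity $\sigma_F^{p^t}=\Id+\Delta_F^{p^t}$ gives $\Delta_F^{p^s}|_{R_{i+1}}=0$. Hence $\sigma_F^{p^{s+1}}(X_i)=X_i+\Delta_F^{p^{s+1}-1}(q_i)=X_i$. An algebra endomorphism fixing $X_i$ and $R_{i+1}$ pointwise is the identity on $R_i=R_{i+1}[X_i]$. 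Because $\sigma_F$ is determined by its values on generators, the paper needs neither your $X_j$-degree filtration, nor the twisted Leibniz rule, nor the inner induction on $d$.

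\textbf{What each approach buys.} Yours never uses $\operatorname{char}K=p$. So it proves more: $\Delta_F$ is locally nilpotent on $K[X]$ for every unitriangular $F$ over any field.

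The paper's argument is tied to characteristic $p$, but it is uniform. Starting from $s=0$ on $R_{n+1}=K$, the exponent rises by exactly one per variable. This yields $\sigma_F^{p^n}=\Id$, i.e.\ $\tau_K(F)\le p^n$ and $|F|\mid p^n$, independently of the degrees of the $q_i$. No characteristic-free argument can give such a bound, since in characteristic $0$ the map $(X_1+X_2^d,\,X_2+1)$ has $\tau_K=d+2$.

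\textbf{Your closing remark.} The claimed explicit bound in terms of $\deg q_i$ is not delivered as written, since the induction on $d$ only produces some $m$. It can be made effective, e.g.\ with a weighted degree that $\Delta_F$ strictly lowers. In characteristic $p$, however, it is superseded by the bound $p^n$.
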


\begin{proof}
Write $R_i=K[X_i,\dots,X_n]$ and $R_{n+1}=K$. Each $R_i$ is $\sigma_F$-stable,
hence $\Delta_F$-stable. We show by descending induction on $i$ that
$\sigma_F^{p^{s}}|_{R_i}=\Id$ for some $s$. 
For $i=1$ this gives
$\Delta_F^{p^{s}}(X_j)=\sigma_F^{p^{s}}(X_j)-X_j=0$ for all $j$ by the Frobenius
identity of Lemma~\ref{lem:order-depth}, i.e.\ $\tau_K(F)\le p^{s}<\infty$. The
case $i=n+1$ is trivial. Assume $\sigma_F^{p^{s}}|_{R_{i+1}}=\Id$. By the
Frobenius identity, $\Delta_F^{p^{s}}|_{R_{i+1}}=0$, and
\[
   \sigma_F^{p^{t}}(X_i)=X_i+\Delta_F^{p^{t}}(X_i)
   =X_i+\Delta_F^{p^{t}-1}(q_i),\qquad q_i\in R_{i+1},
\]
which vanishes as soon as $p^{t}-1\ge p^{s}$, say for $t=s+1$. Thus
$\sigma_F^{p^{s+1}}$ fixes $X_i$, and $\sigma_F^{p^{s+1}}=(\sigma_F^{p^{s}})^{p}$
fixes $R_{i+1}$ pointwise. As $R_i=R_{i+1}[X_i]$, it is the identity on $R_i$.
\end{proof}

The exponent $\lceil\log_p\tau_K(F)\rceil$ depends on $F$ and cannot be bounded
uniformly. In dimension $n\ge 3$ the Pascal depth, and hence the order, is
unbounded.

\begin{proposition}\label{prop:chains}
Let $\operatorname{char}K=2$ and let
$F_n=(X_1+X_2^2,\,X_2+X_3^2,\,\dots,\,X_{n-1}+X_n^2,\,X_n)\in\GA_n(K)$. Then
$F_n$ is unitriangular, hence Pascal finite by Lemma~\ref{lem:unitri}, and
$|F_n|=2^{\lceil\log_2 n\rceil}$.
In particular, for $n=2^{r-1}+1$ the order is $2^{r}$, with $r$ arbitrary.
\end{proposition}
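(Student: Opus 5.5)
The plan is to compute the Pascal depth $\tau_K(F_n)$ exactly and then apply Lemma~\ref{lem:order-depth}. I will aim to show that $\tau_K(F_n)=n$.

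First, $F_n$ is unitriangular by inspection: the $i$-th coordinate is $X_i+q_i$ with $q_i=X_{i+1}^2\in K[X_{i+1},\dots,X_n]$ for $i<n$, and the last coordinate is $X_n$ with $q_n=0$. Lemma~\ref{lem:unitri} then gives Pascal finiteness, although the computation below yields this directly as well.

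The key observation is that in characteristic $2$ the operator $\Delta=\Delta_{F_n}$ commutes with squaring. Since $\sigma=\sigma_{F_n}$ is a ring endomorphism and $\operatorname{char}K=2$,
\[
\Delta(P^2)=\sigma(P)^2-P^2=(\sigma(P)-P)^2=\Delta(P)^2 .
\]
By induction on $k$ this gives $\Delta^k(P^{2^j})=(\Delta^k P)^{2^j}$ for all $k,j\ge 0$. Directly from the definition, $\Delta(X_i)=X_{i+1}^2$ for $i<n$ and $\Delta(X_n)=0$. I will then prove by induction on $k$ that
\[
\Delta^k(X_i)=X_{i+k}^{2^k}\quad (i+k\le n).
\]
The inductive step is
\[
\Delta^{k+1}(X_i)=\Delta^k\bigl(X_{i+1}^2\bigr)=\bigl(\Delta^k X_{i+1}\bigr)^2=\bigl(X_{i+1+k}^{2^k}\bigr)^2=X_{i+k+1}^{2^{k+1}} .
\]
In particular $\Delta^{n-i}(X_i)=X_n^{2^{n-i}}\neq 0$, while $\Delta^{n-i+1}(X_i)=(\Delta X_n)^{2^{n-i}}=0$. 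Hence $\tau_K^{F_n}(X_i)=n-i+1$, and the maximum over $i$ is attained at $i=1$, so $\tau_K(F_n)=n$.

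Lemma~\ref{lem:order-depth} now gives $|F_n|=2^{\lceil\log_2 n\rceil}$. For $n=2^{r-1}+1$ we have $2^{r-1}<n\le 2^r$, so $\lceil\log_2 n\rceil=r$ and the order is $2^r$.

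The only step needing care is the squaring identity, and specifically that it is applied to $\Delta$ itself rather than to the $\sigma$-derivation rule. Once the identity is in hand, the computation is routine.
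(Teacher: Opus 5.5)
Your proof is correct, but it is organised differently from the paper's. The paper never computes the depth of $F_n$. It squares the map directly, proving $F_n^{(2^{j})}(X_i)=X_i+X_{i+2^{j}}^{2^{2^{j}}}$ (with $X_k=0$ for $k>n$) by induction on $j$, and reads off that $F_n^{(2^{j})}=\Id$ iff $2^{j}\ge n$. It uses Lemma~\ref{lem:order-depth} only for the fact that $|F_n|$ is a power of $2$.

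You instead observe that $\Delta$ commutes with Frobenius, $\Delta(P^2)=\Delta(P)^2$. This lets you compute every iterate $\Delta^k(X_i)=X_{i+k}^{2^k}$, hence the exact depth profile $\tau_K^{F_n}(X_i)=n-i+1$ and $\tau_K(F_n)=n$. You then apply the full formula of Lemma~\ref{lem:order-depth}.

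The paper's route is a little shorter, since it tracks only about $\log_2 n$ iterates of the map. Yours gives strictly more information:
\begin{itemize}
\item It proves Pascal finiteness without needing Lemma~\ref{lem:unitri}.
\item It shows directly that the \emph{depth} is unbounded, since $F_n$ realises depth exactly $n$ in dimension $n$.
\item Setting $k=2^j$ and using $\sigma^{2^j}=\Id+\Delta^{2^j}$ recovers the paper's formula for $F_n^{(2^j)}$ as a special case.
\end{itemize}
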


\begin{proof}
The map is unitriangular, so Pascal finite by Lemma~\ref{lem:unitri}. In
characteristic $2$ the Frobenius
identity gives
$F_n^{(2)}(X_i)=X_i+X_{i+1}^2+(X_{i+1}+X_{i+2}^2)^2=X_i+X_{i+2}^{4}$, and
inductively $F_n^{(2^{j})}(X_i)=X_i+X_{i+2^{j}}^{2^{2^{j}}}$, with the convention
$X_k=0$ for $k>n$. Hence $F_n^{(2^{j})}=\Id$ if and only if $2^{j}\ge n$. As the
order is a power of $2$ by Lemma~\ref{lem:order-depth}, $|F_n|=2^{\lceil\log_2
n\rceil}$.
\end{proof}

Thus for $n\ge 3$ no analogue of Theorem~\ref{thm:tower} can hold.

\section{Dolgachev's theorem and the order tower}\label{sec:tower}

The plane Cremona group $\Cr_2(K)$ is the group of $K$-automorphisms of the
rational function field $K(X_1,X_2)$. It contains $\GA_2(K)$, so every restriction
on torsion in $\Cr_2(K)$ applies a fortiori to polynomial automorphisms. We use
the following.

\begin{theorem}[Dolgachev~\cite{Do}; see also {\cite[Th.~3.5]{Se2}}]\label{thm:dolg}
If $\operatorname{char}K=p>0$, then $\Cr_2(K)$ contains no element of order $p^3$.
\end{theorem}

The proof rests on the classification of minimal rational surfaces and has no
counterpart for $n\ge 3$.

\begin{proof}[Proof of Theorem~\ref{thm:tower}]
By Lemma~\ref{lem:order-depth}, $|F|=p^{\lceil\log_p\tau_K(F)\rceil}$ is a power
of $p$. Since $F\in\GA_2(K)\subset\Cr_2(K)$, Theorem~\ref{thm:dolg} forbids order
$p^3$, hence any order $p^{r}$ with $r\ge 3$ (such an element would generate a
cyclic group with an element of order $p^3$). Therefore $|F|\mid p^2$, i.e.
$|F|\in\{1,p,p^2\}$ and $\tau_K(F)\le p^2$. Moreover, order $p$ is realised by any nontrivial transvection
$(X_1+\lambda X_2,X_2)$, and order $p^2$ by the length-two Witt vectors
(Proposition~\ref{prop:witt}) as well as by $G_p$ (Theorem~\ref{thm:Gp}). 
\end{proof}

\begin{corollary}\label{cor:depthbound}
Every Pascal finite $F\in\GA_2(K)$ satisfies $\tau_K(F)\le p^2$. 
\end{corollary}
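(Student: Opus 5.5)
The bound follows directly from Lemma~\ref{lem:order-depth} combined with Theorem~\ref{thm:dolg}, so the proof only has to assemble these two results.

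First, let $F\in\GA_2(K)$ be Pascal finite and put $s=\lceil\log_p\tau_K(F)\rceil$. By Lemma~\ref{lem:order-depth}, $|F|=p^{s}$.

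Second, we bound $s$. Regard $F$ as an element of $\Cr_2(K)$ via the inclusion $\GA_2(K)\subset\Cr_2(K)$; this inclusion is injective, since $\sigma_F$ extends uniquely to the fraction field. If $s\ge3$, then $F^{(p^{s-3})}$ has order exactly $p^3$. This contradicts Theorem~\ref{thm:dolg}, so $s\le 2$.

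Third, we translate back to depth. By definition of the ceiling, $s\le2$ means $\log_p\tau_K(F)\le 2$, that is, $\tau_K(F)\le p^2$. One can also see this without logarithms. By the Frobenius identity, $F^{(p^2)}=X+\Delta_F^{p^2}(X)$. Since $|F|$ divides $p^2$, the left side is $X$, so $\Delta_F^{p^2}(X_i)=0$ for $i=1,2$. Hence $\tau_K^F(X_i)\le p^2$ for each $i$, and taking the maximum gives $\tau_K(F)\le p^2$.

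No step is a real obstacle; the only point requiring care is the ceiling-to-inequality translation in the last step, which the direct Frobenius argument settles.
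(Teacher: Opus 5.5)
Your argument is correct and is essentially the paper's. The corollary is read off from the proof of Theorem~\ref{thm:tower}, which combines Lemma~\ref{lem:order-depth} with Dolgachev's Theorem~\ref{thm:dolg} exactly as you do; your passage to $F^{(p^{s-3})}$ is the paper's remark that an element of order $p^{r}$ with $r\ge 3$ generates a cyclic group containing an element of order $p^3$. The paper also gives an independent route for $\GA_2(K)$ that avoids Dolgachev's theorem, via the Jung--van der Kulk amalgam and Serre's tree theorem (Proposition~\ref{prop:amalgam}).
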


This bound is
not a consequence of the difference calculus. 
In arbitrary dimension no such bound holds (Proposition~\ref{prop:chains}). In
the plane it has two independent sources --- the birational geometry of
surfaces, valid for all of $\Cr_2(K)$ (Theorem~\ref{thm:dolg}), and, for the
polynomial group $\GA_2(K)$ alone, the amalgamated structure
(Proposition~\ref{prop:amalgam} below).

\begin{proposition}\label{prop:amalgam}
Let $\operatorname{char}K=p>0$. Every element of $\GA_2(K)$ of finite $p$-power
order has order dividing $p^2$. Equivalently, by Lemma~\ref{lem:order-depth},
every Pascal finite $F\in\GA_2(K)$ satisfies $\tau_K(F)\le p^2$. 
\end{proposition}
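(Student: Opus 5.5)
The plan is to use the Jung--van der Kulk theorem, which gives $\GA_2(K)=A*_C B$ with $A=\mathrm{Aff}(K,2)$, $B=J(K,2)$ and $C=B(K,2)=A\cap B$. By Serre's theorem, $\GA_2(K)$ then acts on the Bass--Serre tree without inversions, and every element of finite order fixes a vertex. Hence a finite-order $F\in\GA_2(K)$ is conjugate into $A$ or into $B$. Since conjugation preserves order, it suffices to show that every element of $p$-power order in $\mathrm{Aff}(K,2)$, and every such element in $J(K,2)$, has order dividing $p^2$. The equivalence with the depth bound $\tau_K(F)\le p^2$ is then immediate from Lemma~\ref{lem:order-depth}.

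\emph{Affine case.} Let $F=(X\mapsto AX+b)$ with $F^{(p^r)}=\Id$. The linear part satisfies $A^{p^r}=I$, so $(A-I)^{p^r}=0$ and $A$ is unipotent. In dimension $2$ this means $(A-I)^2=0$, hence $A^p=I+p(A-I)=I$. Next, $F^{(p)}$ is the translation $X\mapsto X+(I+A+\dots+A^{p-1})b$, and a translation of finite order has order dividing $p$. So $|F|$ divides $p^2$.

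\emph{Triangular case.} Let $F=(\lambda_1X_1+q(X_2),\ \lambda_2X_2+c)$. The second coordinate of $F^{(m)}$ is the affine map $X_2\mapsto\lambda_2X_2+c$ iterated $m$ times, and the first coordinate has leading coefficient $\lambda_1^m$ in $X_1$. Since $\lambda_i^{p^r}=1$ in characteristic $p$, we get $\lambda_1=\lambda_2=1$, so $F=(X_1+q(X_2),X_2+c)$ is unitriangular. Its second component has order dividing $p$. The map $F^{(p)}$ fixes $X_2$, so it has the form $(X_1+Q(X_2),X_2)$ with
\[
Q(X_2)=\sum_{j=0}^{p-1}q(X_2+jc).
\]
This is a translation in $X_1$ over $K[X_2]$, and it has order dividing $p$. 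Hence $F^{(p^2)}=\Id$.

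The main obstacle is making the Bass--Serre step rigorous. I would cite Serre's result that a finite subgroup of an amalgam is conjugate into one of the factors; the cyclic group $\langle F\rangle$ is finite, so this applies directly. The local computations in the two factors are routine.
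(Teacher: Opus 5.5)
Your proposal is correct and follows the paper's proof essentially step for step. You use Jung--van der Kulk plus Serre's fixed-vertex theorem to conjugate a finite-order element into a factor, and then the same de Jonqui\`eres computation ($\lambda_i=1$, $F^{(p)}$ elementary, $F^{(p^2)}=\Id$). The only difference is cosmetic and lies in the affine factor: the paper embeds $\mathrm{Aff}(K,2)$ into $\mathrm{GL}_3(K)$ and uses nilpotency index at most $3$, whereas you split off the unipotent linear part ($A^p=I$) and note that $F^{(p)}$ is a translation, which is equally valid.
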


\begin{proof}
By the Jung--van der Kulk theorem~\cite{J,vdK}, in the form recalled
in~\cite{AH}, $\GA_2(K)$ is the amalgamated free product
$\GA_2(K)=\mathrm{Aff}(K,2)*_{B(K,2)}J(K,2)$, where
$B(K,2)=\mathrm{Aff}(K,2)\cap J(K,2)$. It acts
without inversion on its Bass-Serre tree, the vertex stabilisers being the
conjugates of the two factors. A finite subgroup of such a group fixes a vertex
(Serre~\cite{SeTr}). When applied to the cyclic group generated by a torsion element,
this makes every finite-order $F\in\GA_2(K)$ conjugate into $\mathrm{Aff}(K,2)$
or into $J(K,2)$. Since order is a conjugacy invariant, it suffices to bound the
$p$-power torsion of each factor.

Over any field $L$ of
characteristic $p$, the only $p$-power root of unity is $1$, since
$\lambda^{p^{s}}=1$ gives $(\lambda-1)^{p^{s}}=\lambda^{p^{s}}-1=0$, whence
$\lambda=1$.

\emph{The affine factor.} Embed
$\mathrm{Aff}(K,2)\hookrightarrow\mathrm{GL}_3(K)$ by sending the map
$X\mapsto AX+b$ to $\left(\begin{smallmatrix}A&b\\0&1\end{smallmatrix}\right)$.
Let $\Id+N$ be the image of an element of $p$-power order, and compute its
eigenvalues in an algebraic closure of $K$. They are $p$-power roots of unity,
hence all equal to $1$. Thus $N\in M_3(K)$ is nilpotent,
of index $\nu:=\operatorname{ind}N\le3$. The Frobenius identity
$(\Id+N)^{p^{s}}=\Id+N^{p^{s}}$ gives $(\Id+N)^{p^{s}}=\Id\iff p^{s}\ge\nu$, so
the order is $p^{\lceil\log_p\nu\rceil}\mid p^{\lceil\log_p3\rceil}\mid p^2$. For
$p\ge3$ one has $\lceil\log_p3\rceil=1$, so the affine factor contributes only
order $p$. It reaches $p^2$ solely at $p=2$, and then only for $\nu=3$, i.e.\ for
$N^2\ne0$. An instance is $(X_1+X_2,\,X_2+1)$ over a field of characteristic $2$.

\emph{The de Jonqui\`eres factor.} Write $E\in J(K,2)$ in the normal form
of~\cite{AH}, $E=(\lambda_1X_1+a_1(X_2),\,\lambda_2X_2+a_2)$ with
$\lambda_1,\lambda_2\in K^\times$, $a_1\in K[X_2]$, $a_2\in K$. An induction on
$m$ shows that $E^{(m)}$ has the same shape, with $\lambda_i$ replaced by
$\lambda_i^{m}$. If $E$ has $p$-power order, say $E^{(p^{s})}=\Id$, then
$\lambda_1^{p^{s}}=\lambda_2^{p^{s}}=1$, so $\lambda_1=\lambda_2=1$ by the
observation above. Thus $E=(X_1+a_1(X_2),\,X_2+a_2)$, and
\[
   E^{(p)}=\Bigl(X_1+\textstyle\sum_{j=0}^{p-1}a_1(X_2+ja_2),\ X_2\Bigr)
\]
is elementary (since $pa_2=0$), so
$E^{(p^2)}=(E^{(p)})^{(p)}=\Id$ and $|E|\mid p^2$. 
The two factors of $p$ correspond to the two steps of this computation. $E^{(p)}$
restores the second coordinate, and $(E^{(p)})^{(p)}$ the first. The
map $M_p$ of Section~\ref{sec:sharp}, with $\tau_K(M_p)=p+1$, realises the bound.

Both factors give order dividing $p^2$, hence so does every $p$-power-order
element of $\GA_2(K)$.
\end{proof}

The bound in Corollary \ref{cor:depthbound} uses
only the amalgamated structure of $\GA_2(K)$ and Serre's tree theorem. It is
independent of Dolgachev's theorem and of the birational geometry of surfaces.

\begin{remark}\label{rem:generation}
Proposition~\ref{prop:amalgam} is an \emph{element-wise} bound, and is not the
generation statement of~\cite{AH}. There it is shown that the group
$\{F\in\GA_2(K):F(0)=0,\ \det J_F=1\}$ is \emph{generated} by elements of order
dividing $p^2$. 
A product of such generators can have arbitrarily large
$p$-power order.
What~\cite{AH} contributes to the present proof is the
amalgam in factor-normal form and the per-factor order computation
$E^{(p^2)}=\Id$. The passage to the individual ceiling is what is added here.
\end{remark}

\section{Sharpness}\label{sec:sharp}

A polynomial automorphism $F=\Id+H$ with $H\in K[X]^n$ is a
\emph{quasi-translation} if $\Id-H$ is its inverse, equivalently if $H\circ F=H$.
Over a field of arbitrary characteristic this is equivalent to
$\Delta_F^2(\Id)=0$. When $\deg H=0$ one recovers an ordinary translation.  In
characteristic $p$ this forces $|F|\mid p$. 
Indeed, 
Lemma~\ref{lem:order-depth} gives $|F|=p^{\lceil\log_p 2\rceil}=p$ (or $1$).
In particular a quasi-translation
cannot have Pascal depth above $2$.

\begin{example}\label{ex:psi}
Let $\operatorname{char}K=p>0$, $w=X_3X_1+X_2^p$, and
$\Psi_p=(X_1-X_3^{p-1}w^{p},\,X_2+X_3w)\in\GA_2(K(X_3))$. By the Frobenius identity
$(X_2+X_3w)^{p}=X_2^{p}+X_3^{p}w^{p}$, so
\[
   \Psi_p(w)=X_3\bigl(X_1-X_3^{p-1}w^{p}\bigr)+(X_2+X_3w)^{p}=X_3X_1+X_2^{p}=w,
\]
i.e. $w$ is $\Psi_p$-invariant. Hence $\Delta_{\Psi_p}(X_1)=-X_3^{p-1}w^{p}$ and
$\Delta_{\Psi_p}(X_2)=X_3w$ are themselves invariant, so $\Delta_{\Psi_p}^2(\Id)
=0$. Hence $\tau_K(\Psi_p)=2$ and $|\Psi_p|=p$. In
particular $\Psi_p$ does not realise depth $p^2$.
\end{example}

\begin{proposition}\label{prop:witt}
Let $\operatorname{char}K=p>0$. The additive group $W_2(K)$ of length-two Witt
vectors, an affine plane on which $W_2(K)$ acts by translations, embeds as
$W_2(K)\hookrightarrow\GA_2(K)\subset\Cr_2(K)$, and its nonzero elements have
order $p^2$. 
\end{proposition}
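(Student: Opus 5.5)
The plan is to write the Witt vector addition law explicitly and read off an automorphism of the plane for each vector. For length two, addition is
\[
   (a_0,a_1)+(b_0,b_1)=\bigl(a_0+b_0,\;a_1+b_1+c(a_0,b_0)\bigr),\qquad
   c(a_0,b_0)=-\sum_{k=1}^{p-1}\tfrac{1}{p}\tbinom{p}{k}a_0^{k}b_0^{p-k}.
\]
Here $c\in\mathbb{Z}[a_0,b_0]$ has integral coefficients and is reduced mod $p$. To $a=(a_0,a_1)\in W_2(K)$ we attach the map
\[
   T_a=\bigl(X_1+a_0,\;X_2+a_1+c(X_1,a_0)\bigr).
\]
This is translation by $a$ on $W_2$, with coordinates $X_1,X_2$ standing for the Witt components. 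It is a polynomial map, triangular after swapping coordinates, and it has Jacobian determinant $1$.

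The steps are as follows.
\begin{itemize}
\item \emph{Action.} Associativity and commutativity of Witt addition give $T_b\circ T_a=T_{a+b}$ (the order of composition is immaterial by commutativity), and $T_0=\Id$. Hence each $T_a$ is invertible with inverse $T_{-a}$, so $T_a\in\GA_2(K)$.
\item \emph{Injectivity.} The map $a\mapsto T_a$ is injective, since $T_a$ sends the origin to $(a_0,a_1+c(0,a_0))=(a_0,a_1)$. This gives the embedding $W_2(K)\hookrightarrow\GA_2(K)\subset\Cr_2(K)$.
\item \emph{Orders.} Because $a\mapsto T_a$ is an injective homomorphism, the order of $T_a$ equals the additive order of $a$ in $W_2(K)$.
\end{itemize}

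It remains to compute additive orders in $W_2(K)$. Use the standard identities $p\cdot a=V F a=(0,a_0^p)$ in characteristic $p$ and $p^2\cdot a=0$ in $W_2$. Then every nonzero $a$ has order $p$ or $p^2$, and the order is $p^2$ exactly when $a_0\neq 0$.

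If one prefers to avoid invoking Witt vector theory, the identity $p\cdot a=(0,a_0^p)$ can be checked directly. Iterating the addition gives $T_a^{(p)}=(X_1,\,X_2+\sum_{j=0}^{p-1}c(X_1+ja_0,a_0))$. The inner sum equals the constant $a_0^p$, because the sum telescopes against the integral identity $(x+y)^p=x^p+y^p+p\,c'$. This matches the de Jonqui\`eres computation of $E^{(p)}$ in the proof of Proposition~\ref{prop:amalgam}.

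The main obstacle is the statement that ``nonzero elements have order $p^2$''. Elements $(0,a_1)$ with $a_1\neq0$ have order only $p$, because $T_{(0,a_1)}$ is the translation $(X_1,X_2+a_1)$. So I would prove the sharp form: $T_a$ has order $p^2$ precisely for $a_0\neq0$, and order $p$ on the nonzero elements of $V W_1(K)$. I would read the proposition's claim as referring to this generic case, namely elements not in the subgroup $V W_1(K)$. That suffices to realise order $p^2$ in Theorem~\ref{thm:tower}.
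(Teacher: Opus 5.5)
Your proposal is correct, and your objection to the statement is justified. For $a=(0,a_1)$ with $a_1\neq0$ you get $c(X_1,0)=0$, so $T_a=(X_1,X_2+a_1)$ has order $p$. Already $W_2(\F_p)\cong\mathbb{Z}/p^2\mathbb{Z}$ has $p-1$ nonzero elements of order $p$. The paper's proof consists of the citation ``a length-two Witt vector has additive order $p^2$'' to Serre, so it makes the same overstatement. The correct claim is yours: $|T_a|=p^2$ exactly when $a_0\neq0$, and $|T_a|=p$ on $VW_1(K)\setminus\{0\}$. This is all that Theorem~\ref{thm:tower} uses.

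Beyond that, your route is more explicit than the paper's. The paper leaves the embedding implicit and takes the orders from Witt-vector theory. It then adds two remarks that are outside the statement: Pascal finiteness via $\Delta_F^{p^2}=\sigma_F^{p^2}-\Id=0$, and maximality of $p^2$ via Dolgachev. You write $T_a$ down in de Jonqui\`eres form and check $T_a^{(p)}=(X_1,X_2+a_0^p)$ by telescoping. This buys three things:
\begin{itemize}
\item the argument is self-contained;
\item it exhibits the order-$p^2$ examples inside $J(K,2)$ (for $p=2$ and $a=(1,0)$, swapping coordinates gives the map $(X_1+X_2,X_2+1)$ from the proof of Proposition~\ref{prop:amalgam});
\item it gives the sharp description of which elements have order $p^2$.
\end{itemize}

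Two points should be stated explicitly. First, $T_b\circ T_a=T_{a+b}$ must be the polynomial identity $c(X,a_0)+c(X+a_0,b_0)=c(a_0,b_0)+c(X,a_0+b_0)$ in $\mathbb{Z}[X,a_0,b_0]$, i.e.\ associativity in $W_2(K[X_1,X_2])$. Agreement on $K$-points would not suffice when $K$ is finite. Second, for use in Theorem~\ref{thm:tower} you should add the one-line check that $T_a$ is Pascal finite: $\Delta_{T_a}^{p^2}=\sigma_{T_a}^{p^2}-\Id=0$.
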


\begin{proof}
A length-two Witt vector has additive order $p^2$~\cite[Ch.~II, \S6]{SeCL},
giving the embedding and the
order. Such an element is unipotent of finite $p$-power order, hence
$\Delta_F^{p^2}=0$ by the identity in Lemma~\ref{lem:order-depth}, so it is Pascal
finite. Dolgachev's theorem (Theorem~\ref{thm:dolg}) shows $p^2$ is maximal.
\end{proof}

Consequently $\GA_2(K)$ contains Pascal finite automorphisms of order
$p^2$, and the bound of Theorem~\ref{thm:tower} cannot be lowered to $p$.
Proposition~\ref{prop:witt} settles sharpness of the order. 
It does not settle sharpness of the depth. An order-$p^2$ map has $\tau_K\in\{p+1,
\dots,p^2\}$. We present a map with
$\tau_K=p^2$.
We keep the notation of Theorem~\ref{thm:Gp}.

The map
$M_p=(X_1+X_2^{p-1},X_2+1)$ has order $p^2$ but only $\tau_K(M_p)=p+1$. 
The Pascal depth, unlike order, is not preserved under nonlinear conjugation.
Take $S_p=(X_2+X_1^{2p-1},X_1)$. It has the inverse
$S_p^{-1}=(X_2,\,X_1-X_2^{2p-1})$. The composition
$G_p=S_p\circ M_p\circ S_p^{-1}$ is of the form 
\begin{equation}\label{eq:Gp-explicit}
   G_p=\Bigl(\,X_1-X_2^{2p-1}+1+\bigl(X_2+(X_1-X_2^{2p-1})^{p-1}\bigr)^{2p-1},
   \;\;X_2+(X_1-X_2^{2p-1})^{p-1}\,\Bigr).
\end{equation}
For $p=2$ we obtain a map over $\F_2$
\[
   G_2=\bigl(X_1^{3}+X_1^{2}X_2^{3}+X_1^{2}X_2+X_1X_2^{6}+X_1X_2^{2}+X_1
   +X_2^{9}+X_2^{7}+X_2^{5}+1,\;\;X_1+X_2^{3}+X_2\bigr),
\]
a tame automorphism of degree $9$ and order $4$ with Pascal depth
$\tau_K(G_2)=4$.

\begin{lemma}\label{lem:conj}
For $T\in\GA_n(K)$ and any $F$, $\Delta_{TFT^{-1}}=\sigma_{T^{-1}}\circ\Delta_F
\circ\sigma_{T}$, hence 
\[ \tau_K^{TFT^{-1}}(X_i)=\tau_K^{F}(T_i) \quad \mathrm{and} \quad \tau_K(TFT^{-1})=\max_i\tau_K^{F}(T_i).\]
\end{lemma}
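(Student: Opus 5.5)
The plan is to derive the operator identity from how pullbacks behave under composition, and then read off the depths from it, using that conjugation by the invertible operator $\sigma_T$ preserves vanishing.

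\emph{Step 1: pullbacks reverse composition.} By definition $\sigma_G(P)=P\circ G$. Hence for any polynomial maps $G,H$,
\[
\sigma_{G\circ H}(P)=P\circ G\circ H=\sigma_H(\sigma_G(P)),
\]
so $\sigma_{G\circ H}=\sigma_H\circ\sigma_G$. Applying this twice gives $\sigma_{TFT^{-1}}=\sigma_{T^{-1}}\circ\sigma_F\circ\sigma_T$. This uses only that composition of polynomial maps is associative, so $F$ need not be an automorphism.

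\emph{Step 2: the operator identity.} Since $T$ is an automorphism, $\sigma_T\circ\sigma_{T^{-1}}=\sigma_{T^{-1}\circ T}=\Id$, and likewise $\sigma_{T^{-1}}\circ\sigma_T=\Id$. Therefore
\[
\Delta_{TFT^{-1}}=\sigma_{T^{-1}}\sigma_F\sigma_T-\sigma_{T^{-1}}\sigma_T=\sigma_{T^{-1}}\circ\Delta_F\circ\sigma_T .
\]

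\emph{Step 3: iterating.} In the $m$-fold power of this expression, each inner pair $\sigma_T\sigma_{T^{-1}}$ collapses to $\Id$. This gives $\Delta_{TFT^{-1}}^m=\sigma_{T^{-1}}\circ\Delta_F^m\circ\sigma_T$ for all $m\ge1$.

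\emph{Step 4: reading off the depths.} Evaluate at $X_i$, using $\sigma_T(X_i)=T_i$:
\[
\Delta^m_{TFT^{-1}}(X_i)=\sigma_{T^{-1}}\bigl(\Delta_F^m(T_i)\bigr).
\]
Because $\sigma_{T^{-1}}$ is bijective (its inverse is $\sigma_T$), the left side vanishes if and only if $\Delta_F^m(T_i)=0$. So the sets of $m$ at which the algorithm halts coincide, and $\tau_K^{TFT^{-1}}(X_i)=\tau_K^F(T_i)$. This includes the case where both depths are $\infty$. Taking the maximum over $i$ gives the formula for $\tau_K(TFT^{-1})$.

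There is no real obstacle here. The only point needing care is Step 1: the contravariance of $\sigma$ decides whether the identity reads $\sigma_{T^{-1}}\Delta_F\sigma_T$ or $\sigma_T\Delta_F\sigma_{T^{-1}}$, and hence whether one ends up evaluating $\Delta_F$ on $T_i$ or on $(T^{-1})_i$. I would check this against the explicit case $G_p=S_pM_pS_p^{-1}$, where the relevant polynomials are the coordinates of $S_p$.
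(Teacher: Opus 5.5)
Your proof is correct and follows the paper's argument. Contravariance of the pullback gives $\sigma_{TFT^{-1}}=\sigma_{T^{-1}}\sigma_F\sigma_T$, hence the operator identity, and injectivity of $\sigma_{T^{-1}}$ transfers the vanishing of $\Delta_F^m(T_i)$; your explicit iteration step and your treatment of the infinite-depth case spell out what the paper leaves implicit.
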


\begin{proof}
From $\sigma_{TFT^{-1}}=\sigma_{T^{-1}}\sigma_F\sigma_{T}$ we get
\[
   \Delta_{TFT^{-1}}=\sigma_{T^{-1}}(\sigma_F-\Id)\sigma_T
   =\sigma_{T^{-1}}\Delta_F\sigma_T,
\]
so $\Delta_{TFT^{-1}}^k(X_i)=\sigma_{T^{-1}}\bigl(\Delta_F^k(T_i)\bigr)$. As $\sigma_{T^{-1}}$ is injective,
this vanishes iff $\Delta_F^k(T_i)=0$.
\end{proof}

\begin{proof}[Proof of Theorem~\ref{thm:Gp}]
By the Jung--van der Kulk theorem~\cite{J,vdK} every element of $\GA_2(K)$ is tame. 
$S_p$ and $M_p$ are tame, so $G_p$ is tame. Since order is a conjugacy invariant, $|G_p|=|M_p|$.
Iterating $M_p$ gives $\sigma_{M_p}^{n}(X_2)=X_2+n$ and
$\sigma_{M_p}^{n}(X_1)=X_1+P_n$ with $P_n:=\sum_{j=0}^{n-1}(X_2+j)^{p-1}$, whence
$\tau_K(M_p)=p+1$ and $|M_p|=p^{\lceil\log_p(p+1)\rceil}=p^2$ by
Lemma~\ref{lem:order-depth}.

By Lemma~\ref{lem:conj}, $\tau_K(G_p)=\max\{\tau_K^{M_p}(X_2+X_1^{2p-1}),\,
\tau_K^{M_p}(X_1)\}$. Write $\sigma:=\sigma_{M_p}$, $\Delta:=\Delta_{M_p}$.
Here $\Delta(X_2)=1$ and $\Delta^2(X_2)=0$, and $\tau_K^{M_p}(X_1)=p+1$. Since
$\Delta$ is $K$-linear, $\Delta^{m}(X_2+X_1^{2p-1})=\Delta^{m}(X_1^{2p-1})$ for $m\ge2$. 
Hence,
$\tau_K^{M_p}(X_2+X_1^{2p-1})=\tau_K^{M_p}(X_1^{2p-1})$ for $\tau_K^{M_p}(X_1^{2p-1})\ge2$. It therefore suffices to
prove $\tau_K^{M_p}(X_1^{2p-1})=p^2$. Once this holds,
$\tau_K(G_p)>2$, so $G_p$ is not a quasi-translation.

\emph{Upper bound.} $\Delta^{p^2}=(\sigma-\Id)^{p^2}=\sigma^{p^2}-\Id=0$ because
$|M_p|=p^2$.

\emph{Lower bound.} For $p=2$ one checks $\tau_K^{M_2}(X_1^{3})=4$
directly.  

Take $p$ odd. Since $p^2-1=(p-1)+(p-1)p$ with $0\le p-1<p$, Lucas'
theorem~\cite{L} gives $\binom{p^2-1}{n}\equiv(-1)^{n}$ and
$(-1)^{p^2-1-n}\binom{p^2-1}{n}\equiv1$ for $0\le n\le p^2-1$.
 Hence \eqref{eq:newton} gives
\[
   \Delta^{p^2-1}(X_1^{2p-1})=\sum_{n=0}^{p^2-1}\sigma^{n}(X_1^{2p-1})
   =\sum_{n=0}^{p^2-1}(X_1+P_n)^{2p-1}
   =\sum_{a=0}^{2p-1}\binom{2p-1}{a}X_1^{a}\,S_{2p-1-a},
\]
where $S_k:=\sum_{n=0}^{p^2-1}P_n^{k}$. Writing $n=n_1p+n_0$, one has
$P_n=P_{n_0}-n_1$, since $\sum_{j\in\F_p}(X_2+j)^{p-1}=-1$ gives $P_{n+p}=P_n-1$.
With $\sum_{g\in\F_p}g^{i}=-1$ iff $(p-1)\mid i$, $i\ge1$ (and $0$ otherwise),
summation over each complete block yields
\[
   S_k=\sum_{n_0=0}^{p-1}\ \sum_{g\in\F_p}(P_{n_0}-g)^{k}
   =-\sum_{m\ge1}\binom{k}{(p-1)m}\,T_{k-(p-1)m},
   \qquad T_r:=\sum_{n_0=0}^{p-1}P_{n_0}^{r},
\]
where only the indices $m$ with $(p-1)m\le k$ occur.

We claim $S_k=0$ for $0\le k\le 2p-2$. In that range only $m=1$ and $m=2$ can
occur. For $m=1$, if $k\le p-2$ then $\binom{k}{p-1}=0$, and if $k=p-1$ the term is
$T_0=p\equiv0$. If $p\le k\le 2p-2$, then $k=k_0+1\cdot p$ with $0\le k_0\le p-2$, so
Lucas gives $\binom{k}{p-1}\equiv\binom{1}{0}\binom{k_0}{p-1}=0$.
 For $m=2$ the constraint
$2p-2\le k\le 2p-2$ forces $k=2p-2$, and the term is
$\binom{2p-2}{2p-2}T_0=T_0\equiv0$. This proves the claim.

Consequently every term with $a\ge1$ in the expansion above has index
$2p-1-a\le 2p-2$ and therefore vanishes, and the sum reduces to its $a=0$ term
$S_{2p-1}$. In $S_{2p-1}$ the surviving indices are $m=1$ and $m=2$, and from
$2p-1=(p-1)+1\cdot p$, $p-1=(p-1)+0\cdot p$ and $2p-2=(p-2)+1\cdot p$, Lucas gives
$\binom{2p-1}{p-1}\equiv\binom{1}{0}\binom{p-1}{p-1}=1$ and
$\binom{2p-1}{2p-2}\equiv\binom{1}{1}\binom{p-1}{p-2}=p-1\equiv-1$, whence
$S_{2p-1}=-\bigl(T_p-T_1\bigr)=T_1-T_p$. Finally the power-sum
identity $\sum_{j\in\F_p}(1+j)(Y+j)^{p-1}=Y-1$, together with
$(X_2+j)^{p(p-1)}=(X_2^{p}+j)^{p-1}$ (Frobenius), gives
\[
   T_1=\sum_{n_0}P_{n_0}=1-X_2,\qquad T_p=\sum_{n_0}P_{n_0}^{p}=1-X_2^{p},
\]
so that
\[
   \Delta^{p^2-1}(X_1^{2p-1})=S_{2p-1}=T_1-T_p=X_2^{p}-X_2\ \neq\ 0 .
\]
Therefore $\tau_K^{M_p}(X_1^{2p-1})=p^2$, whence $\tau_K(G_p)=p^2$, and
$|G_p|=|M_p|=p^2$.
\end{proof}

\begin{remark}\label{rem:as-invariant}
For any $F$ and any $g$ with $\tau_K^F(g)=m<\infty$, the last nonzero iterate
$\Delta_F^{m-1}(g)$ lies in $\ker\Delta_F$, the subring of $K[X]$ of polynomials
fixed by $\sigma_F$. In the present case $\Delta_{M_p}$ restricts on $K[X_2]$ to
$q(X_2)\mapsto q(X_2+1)-q(X_2)$, so $\ker\Delta_{M_p}\cap K[X_2]=K[X_2^{p}-X_2]$,
generated by the Artin--Schreier polynomial $\wp(X_2)=X_2^{p}-X_2$. The computed
value $\Delta^{p^2-1}(X_1^{2p-1})=\wp(X_2)$ is that generator.
\end{remark}

\section{Summary}\label{sec:disc}

Theorems~\ref{thm:tower} and~\ref{thm:Gp} describe the order and depth of Pascal
finite plane automorphisms. The order is a function of the depth
(Lemma~\ref{lem:order-depth}), and Dolgachev's theorem restricts it to the three
values $\{1,p,p^2\}$, so that order $p^2$ confines the depth to
$\{p+1,\dots,p^2\}$. Both nontrivial values are attained, the larger one sharply
in order (the Witt vectors, Proposition~\ref{prop:witt}) and in depth (the map
$G_p$, Theorem~\ref{thm:Gp}).

The depth is the finer invariant. It distinguishes maps of equal order, and it is
not a conjugacy invariant. By Lemma~\ref{lem:conj} it is computed from the
coordinates of the conjugator, and $\tau_K(M_p)=p+1$ while $\tau_K(G_p)=p^2$
(Theorem~\ref{thm:Gp}). At the bottom of the range, the quasi-translations are
exactly the maps of depth at most $2$, and so have order dividing $p$. The
converse fails, an order-$p$ map being subject only to $\tau_K\le p$.

The plane bound $\tau_K\le p^2$ has, by Proposition~\ref{prop:chains}, no
analogue in higher dimension. It rests on two independent two-dimensional
structures, each absent for $n\ge3$: the birational geometry of minimal rational
surfaces behind Dolgachev's theorem, and the Jung--van der Kulk amalgam behind
Proposition~\ref{prop:amalgam}. The latter has no counterpart in dimension
three, where the Nagata map~\cite{N} is wild.

\section*{Acknowledgements}

This research was supported by the AGH University of Krakow within subsidy of Polish Ministry of Science and Higher Education (grant no. 16.16.420.054).

\end{document}